\title{A New Family of Somos-like Recurrences}
\author{Paul Heideman\\\small University of Wisconsin\\\small Madison, WI 53705\\
\small \texttt{ppheideman@wisc.edu}
 \and Emilie Hogan\\\small Department of Mathematics\\
 \small Rutgers, The State University of New Jersey\\\small Piscataway, NJ 08854-8019\\
  \small \texttt{eahogan@math.rutgers.edu} }
\newtheorem{theorem}{Theorem}
\newtheorem{lemma}{Lemma}
\newtheorem{conjecture}{Conjecture}
\newcommand{\Z}{\mathbb{Z}}
\newcommand{\R}{\mathbb{R}}
\begin{document}
\maketitle
\bibliographystyle{plain}

\begin{abstract}
We exhibit a three parameter infinite family of quadratic recurrence
relations inspired by the well known Somos sequences. For one
infinite subfamily we prove that the recurrence generates an
infinite sequence of integers by showing that the same sequence is
generated by a linear recurrence (with suitable initial conditions).
We also give conjectured relations among the three parameters so
that the quadratic recurrences generate sequences of integers.
\end{abstract}

\section{Introduction}\label{intro}
It is always the case that quadratic recurrence relations generate
sequences of rational numbers, given that the first finitely many
terms are $1$. That there exist quadratic recurrences which generate
only integers is very surprising. Michael Somos, through
investigations of elliptic theta functions, discovered one such
quadratic recurrence which began a new wave of research into this
phenomenon \cite{DGale}.  After Somos' discovery \cite{Somos} that
the recurrence
$$S(n) = \bigg(S(n-1)S(n-5) + S(n-2)S(n-4) + S(n-3)^{2}\bigg) / S(n-6)$$
\noindent generated integers when given initial conditions $S(i)=1$
for $0 \leq i \leq 5$, a surge of research began on related
sequences. Most importantly for the goals of our problem was the
sequence discovered by Dana Scott defined by the recurrence
\begin{equation}\label{DScott}
D(n) = \bigg(D(n-1)D(n-3)+D(n-2)\bigg)/D(n-4)
\end{equation}
with initial conditions $D(i)=1$ for $0 \leq i \leq 3$. This
recurrence differs from Somos' original recurrence as it mixes
quadratic and linear terms. However, it also defines a sequence
consisting only of integers.

The above two sequences possess an even stronger property than
integrality. If the initial conditions are given as $S(m)=x_{m}$
(resp. $D(m)=x_m$) for $0 \leq m \leq 5$ (resp. $0 \leq m \leq 3$),
these recurrences produce Laurent polynomials. The set of Laurent
polynomials are defined in \cite{Mathworld} as
$$ \R \left[t, t^{-1}\right] = \Big\{\ \sum a_{i} t^i ~|~ i \in \Z ,
a_{i} \neq 0 ~\textrm{for finitely many}~ i \in \Z \Big\}\ $$

\noindent We will define Laurent polynomials in more than one
variable in the same manner as above. That is, the set of Laurent
polynomials in the variables $\{x_i\}_{i=1}^n$ is defined to be
\begin{align}
\R \left[\{x_i\}_{i=1}^n,\{x_i^{-1}\}_{i=1}^n\right] &= \label{LaurentDef}\\
\Big\{\sum a_{i} \bigg(\prod_{j=1}^n x_j^{n_j}\bigg) &|~ i,n_j \in
\Z , a_{i} \neq 0 ~\textrm{for finitely many}~ i \in \Z \Big\}\
\notag
\end{align}
This Laurent property is discussed in Fomin and Zelevinsky's article
\cite{FZ}, and has been used to extract combinatorial information
from sequences.

The work done by Dana Scott was important to our discovery since
it gave us the idea to probe for sequences defined by a mixture of
quadratic and linear terms, rather than just the Somos-like
quadratic recurrences. Additionally, work done by Reid Barton
\cite{Barton} on a similar sequence resulted in a combinatorial
interpretation, which led us to believe that a similar
interpretation of our sequences exists.

\section{Statement of Problem}\label{statement}
While looking at the family of quadratic recurrences given by
\begin{equation} \label{eqnlittlek}
a(n)a(n-k)=a(n-1)a(n-k+1)+a(n-(k-1)/2)+a(n-(k+1)/2)
\end{equation} with initial conditions $a(m) = 1$ for $m \leq k$
we noticed that they seemed to produce a sequence of integers for
any odd value of $k$ we chose. By substituting $k=2K+1$ we can
transform the recurrence so it is valid for any positive integer
value of $K$. This defines the following recurrence relation
\begin{equation}\label{maineqn}
a(n) a(n-(2K+1)) = a(n-1) a(n-2K) + a(n-K) + a(n-K-1)
\end{equation}
now with initial conditions $a(m) = 1$ for $m \leq 2K+1$.

For instance, when $K=1$ the recurrence becomes
\begin{displaymath}
a(n)a(n-3)=a(n-1)a(n-2)+a(n-1)+a(n-2)
\end{displaymath}
generating the sequence $\{a(n)\} = \{1,1,1,3,7,31,85...\}$.

We have proven the integrality assertion by showing that the
sequence produced by the quadratic recurrence also satisfies a
linear recurrence with integer coefficients and integer initial
conditions. It has also been shown that  when the initial conditions
are given as formal variables the recurrences produce Laurent
polynomials (see \cite{Hogan}). Our eventual goal is to find a
combinatorial interpretation (e.g. counting perfect matchings of
certain graphs).

\section{Proof of Integrality}\label{proof}
In order to prove that the quadratic recurrence produces integers we
show that the sequence also satisfies a linear recurrence with
integer coefficients.

\begin{theorem}\label{thm}
The recurrence
\begin{equation*}
a(n) a(n-(2K+1)) = a(n-1) a(n-2K) + a(n-K) + a(n-K-1)
\end{equation*}
with initial conditions $a(m) = 1$ for $m \leq 2K+1$ generates an
infinite sequence of integers.
\end{theorem}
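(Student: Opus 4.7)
The plan is to exhibit a linear recurrence with integer coefficients that the sequence $\{a(n)\}$ satisfies. Once such a recurrence is in hand, integrality of every term is immediate, since each new value is computed as an integer linear combination of earlier integer values, with no division at all.

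To discover the recurrence, I would first compute many terms of $\{a(n)\}$ for several small values of $K$ (e.g.\ $K = 1, 2, 3$) from the quadratic rule, and then search for an integer linear dependence
$$a(n) = \sum_{i=1}^{d} c_i\, a(n-i)$$
by solving a suitable linear system in the unknowns $c_i$ over a window of consecutive values. By tabulating how the coefficients and order vary with $K$, I would try to guess a closed-form expression for $c_i(K)$ and $d(K)$.

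With a candidate linear recurrence $L$ identified, the next step is to prove that the sequence generated by $L$, given the first $d(K)$ values of $\{a(n)\}$ as initial data, satisfies the original quadratic relation (and hence coincides with $\{a(n)\}$ by uniqueness). The cleanest route is induction on $n$: assuming the sequences agree through index $n-1$, it suffices to verify the polynomial identity
$$\Bigl(\sum_{i=1}^{d(K)} c_i\, a(n-i)\Bigr) a(n-2K-1) = a(n-1)\, a(n-2K) + a(n-K) + a(n-K-1).$$
To reduce this to a finite check, I would iterate $L$ itself to rewrite all of the long-range factors on both sides in terms of a common block of consecutive values $a(n-1), a(n-2), \ldots, a(n-N)$ for some fixed $N = N(K)$; the identity then collapses to an algebraic equality between two explicit polynomials in that block.

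The main obstacle is this inductive verification: even granted the correct $L$, orchestrating the substitutions uniformly in $K$ is delicate, and it is not obvious a priori that the quadratic identity will drop out after cancellation. I expect that a useful auxiliary ingredient will be the secondary identity obtained by subtracting the quadratic recurrence at $n$ from the one at $n+1$, namely
$$a(n-2K)\bigl[a(n+1)+a(n-1)\bigr] - a(n)\bigl[a(n-2K+1)+a(n-2K-1)\bigr] = a(n-K+1) - a(n-K-1),$$
which ties together nearby terms in a way that can be iterated (telescoping in $n-K$) to bridge between the quadratic and the conjectured linear relation. Packaging these manipulations into a clean induction uniform in $K$ is where the bulk of the technical work will lie.
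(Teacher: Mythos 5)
Your overall strategy --- guess a linear recurrence with integer coefficients, then prove by induction that the linearly generated sequence satisfies the quadratic relation --- is exactly the paper's. (The recurrence that works is $a(n)=(2K^2+8K+4)\bigl(a(n-2K)-a(n-4K)\bigr)+a(n-6K)$ for $n\ge 6K+2$.) But there is a conceptual gap in your inductive step. You propose to use the linear recurrence to rewrite all long-range terms over a common block of consecutive values and then verify ``an algebraic equality between two explicit polynomials in that block.'' No such polynomial identity exists: a sequence satisfying only the linear recurrence with \emph{generic} initial data does not satisfy the quadratic relation, so the quadratic identity cannot be an algebraic consequence of the linear one alone. What actually closes the induction is strong induction on the quadratic discrepancy itself. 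Setting $\phi(n)=a(n)a(n-2K-1)-a(n-1)a(n-2K)-a(n-K)-a(n-K-1)$ and substituting the linear recurrence for $a(n)$, $a(n-1)$, $a(n-K)$, $a(n-K-1)$ (and subsequently for $a(n-2K)$ and $a(n-2K-1)$), the residue organizes itself into a combination of $\phi(n-2K)$, $\phi(n-4K)$, and $\phi(n-6K)$ with coefficients in $\Z[K]$, each of which vanishes by the strong induction hypothesis. Your ``secondary identity'' is a step in this direction --- it is precisely a consequence of the quadratic relation at earlier indices --- but you have not identified the decomposition of the residue into shifted copies of $\phi$, and that decomposition is the crux of the argument.

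A second, smaller gap concerns the seed of the induction. The linear recurrence has order $6K$, so you need the first $6K+1$ terms as initial data, and you must prove, \emph{uniformly in $K$}, both that these terms are integers and that the base case $\phi(6K+2)=0$ holds. Computing many terms for a few small values of $K$ cannot accomplish this. The paper handles it by establishing closed-form polynomial expressions in $K$ and a block index for $a(2K+i)$, $a(3K+i)$, $a(4K+i)$, $a(5K+i)$ with $2\le i\le K+1$, each proved by a short induction exploiting the fact that the relevant earlier factors all equal $1$; these explicit formulas are then substituted into the base-case computation. Your plan needs an analogous uniform description of the initial segment before the induction can start.
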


\begin{lemma}\label{lemma1}
The initial $6K+1$ terms of the sequence given by the quadratic
recurrence relation (\ref{maineqn}) satisfy the following
conditions:
\begin{displaymath}
\begin{array}{rclc}
a(i) &=& 1 & 1 \leq i \leq 2K+1 \\
  a(2K+i) &=& -1+2i & 2 \leq i \leq K+1 \\
  a(3K+i) &=& 1+2K-2i+2i^2 & 2 \leq i \leq K+1 \\
  a(4K+i) &=& -3-8K+2i-2K^2+12Ki+2i^2+4K^2i & 2 \leq i \leq K+1 \\
  a(5K+i) &=& 3+10K-10i+16K^2-16Ki+8i^2 & \\
  & & +4K^3-4K^2i+16Ki^2+4K^2i^2 & 2 \leq i \leq K+1
\end{array}
\end{displaymath}
\end{lemma}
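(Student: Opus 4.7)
The plan is to prove the lemma by strong induction on $n$, verifying the four stated block formulas in sequence. The initial-condition block ($1 \leq i \leq 2K+1$) requires nothing to check. For each subsequent block $a(jK+i)$ with $j \in \{2,3,4,5\}$, I would fix $j$ and perform an inner induction on $i$ from $2$ up to $K+1$, showing that the proposed polynomial satisfies recurrence (\ref{maineqn}).

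For $n = jK+i$ in these ranges, the five indices appearing on the right-hand side of (\ref{maineqn})---namely $n-1$, $n-K$, $n-K-1$, $n-2K$, and $n-(2K+1)$---fall in predictable places. Concretely, $n-1$ is either the immediately preceding term of the current block (handled by the inner induction) or the last term of the previous block; $n-K$ and $n-K-1$ lie in the block indexed by $j-1$; while $n-2K$ and $n-(2K+1)$ lie in the block indexed by $j-2$ or in the initial window. For example, the $a(3K+i)$ block reduces, after substituting $a(K+i) = a(K+i-1) = 1$ from the initial conditions and the known values $a(2K+i) = 2i-1$, to the simple first-order recursion $a(3K+i) = a(3K+i-1) + 4i-4$, which immediately yields the claimed quadratic in $i$. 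To handle the boundary $i=2$ uniformly, I would first verify that each block's formula extends consistently to $i=1$, yielding the last value of the preceding block; this eliminates case-splitting at the block transitions and makes the inner induction begin cleanly.

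Once the appropriate substitutions from the induction hypothesis are made, the inductive step becomes a purely algebraic check: both sides of (\ref{maineqn}) become polynomials in $i$ with coefficients in $\Z[K]$ of bounded degree, and it suffices to expand and compare coefficients. The main obstacle is not conceptual but computational: the $a(5K+i)$ verification, in which the closed form already involves cubic terms in $K$ and quadratic terms in $i$, produces bulky polynomial products on both sides (the left-hand side multiplies the $5K+i$ formula against $a(3K+i-1)$, and the right-hand side multiplies the $5K+i-1$ formula against $a(3K+i)$). The coefficient-by-coefficient matching there is tedious and best carried out with a computer algebra system. Beyond this bookkeeping, no new idea is required, and integrality of the generated terms follows immediately from the fact that the explicit formulas take integer values at integer $i$ and $K$.
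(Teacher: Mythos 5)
Your proposal is correct and follows essentially the same route as the paper: each block $a(jK+i)$ is handled by an inner induction on $i$, locating the five indices of (\ref{maineqn}) in earlier (already verified) blocks or the initial window, and reducing the step to a polynomial identity in $i$ and $K$ (the paper carries this out explicitly only for the $a(2K+i)$ block and delegates the heavier blocks to the reader/computer, just as you propose). Your observation that each block formula extends consistently to $i=1$ is a nice way to make the base cases uniform, but it is a refinement of, not a departure from, the paper's argument.
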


\begin{proof}
Notice that $a(i) = 1$ for $1 \leq i \leq 2K+1$ by definition. Each
of the other relations is proved independently by induction. All
initial relations were originally conjectured and proved via a
computer program written by Doron Zeilberger \cite{Zeilberger}. We
will now show the proof for $a(2K+i)=-1+2i$ with $2 \leq i \leq
K+1$.

Base case $i=2$: We need to verify that $a(2K+2) = 3$. From the
quadratic definition of the sequence we have
\begin{align*}
a(2K+2)a(1) &= a(2K+1)a(2)+a(K+2)+a(K+1)\\
a(2K+2) \cdot 1 &= 1 \cdot 1 + 1 + 1\\
\Rightarrow a(2K+2) &= 3
\end{align*}
Now we assume, as the inductive hypothesis, that $a(2K+i) = -1+2i$.
We need to show that $a(2K+(i+1)) = -1+2(i+1) = 2i+1$ for $2 \leq i
\leq K$. We know that
\begin{displaymath}
a(2K+(i+1))a(i) = a(2K+i)a(i+1)+a(K+(i+1))+a(K+i)
\end{displaymath}
Since $2 \leq i \leq K$, we have
\begin{eqnarray*}
3 \leq & i+1 & \leq K+1\\
K+3 \leq & K+(i+1) & \leq 2K+1\\
K+2 \leq & K+i & \leq 2K
\end{eqnarray*}
Therefore $a(i+1)=a(K+(i+1))=a(K+i)=1$.
\begin{align*}
a(2K+(i+1)) \cdot 1 &= (-1+2i) \cdot 1 + 1 + 1\\
\Rightarrow a(2K+(i+1)) &= 2i+1
\end{align*}
So by induction, $a(2K+i)=-1+2i$ for $2 \leq i \leq K+1$.

Proofs of the other initial relations follow the same series of
steps, and will therefore be left up to the reader to verify.
\end{proof}

\begin{lemma}\label{lemma2}
If the sequence $\{a(n)\}$ is given by the quadratic recurrence
(\ref{maineqn}), then it also satisfies the linear recurrence given
by
\begin{equation} \label{lin}
a(n)=\left[2K^2+8K+4\right](a(n-2K)-a(n-4K))+a(n-6K)
\end{equation}
for all $n \geq 6K+2$, where the initial $6K+1$ values are taken to
be the first $6K+1$ values of the quadratic recurrence
(\ref{maineqn}).
\end{lemma}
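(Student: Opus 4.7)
My plan is to prove Lemma 2 by strong induction on $n$, starting at $n = 6K+2$.

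For the base case I would verify the identity
$$a(6K+2) = (2K^2+8K+4)\bigl(a(4K+2) - a(2K+2)\bigr) + a(2)$$
directly from the closed-form expressions given in Lemma 1: the values $a(4K+2)$, $a(2K+2)$, and $a(2)=1$ are all covered there, and $a(6K+2)$ itself can be obtained either by extending Lemma 1 one more step or by one application of (\ref{maineqn}). Both sides then reduce to explicit polynomials in $K$ whose equality is a finite check.

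For the inductive step, write $L = 2K^2+8K+4$ and assume the linear relation holds at every index $m$ with $6K+2 \le m < n$. Using the quadratic recurrence (\ref{maineqn}) to solve for $a(n)$ and clearing denominators, the desired equality $a(n) = L(a(n-2K) - a(n-4K)) + a(n-6K)$ becomes the polynomial identity
\begin{equation*}
a(n-1)a(n-2K) + a(n-K) + a(n-K-1) = a(n-2K-1)\bigl[L(a(n-2K) - a(n-4K)) + a(n-6K)\bigr].
\end{equation*}
I would then apply the inductive hypothesis to $a(n-1)$ (valid once $n \ge 6K+3$; the case $n=6K+2$ is the base case), replacing it by $L(a(n-2K-1) - a(n-4K-1)) + a(n-6K-1)$. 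After this substitution the $L\cdot a(n-2K-1)a(n-2K)$ terms on the two sides cancel, leaving a relation that mixes products such as $a(n-4K-1)a(n-2K)$, $a(n-6K-1)a(n-2K)$, and $a(n-2K-1)a(n-4K)$, $a(n-2K-1)a(n-6K)$ together with the linear inhomogeneous terms $a(n-K)+a(n-K-1)$.

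The key step is then to eliminate each of those cross-products using the quadratic recurrence at shifted indices: (\ref{maineqn}) at $n-2K$ converts $a(n-2K-1)a(n-4K)$ into $a(n-2K)a(n-4K-1)$ plus lower-order corrections, (\ref{maineqn}) at $n-4K$ does the same for the $a(n-6K)$ product, and so on. Each such substitution either cancels a quadratic term or reduces it to a sum of values $a(m) + a(m-1)$ at various shifted indices $m$. The hope, and the heart of the argument, is that once every quadratic cross-product has been rewritten this way, the surviving linear terms telescope to $0$. This telescoping is where I expect the main difficulty to lie: it requires choosing the order and combination of substitutions carefully, and it is essentially equivalent to proving the polynomial identity symbolically. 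If the direct attack becomes unwieldy, an alternative I would pursue in parallel is to define $b(n)$ by the linear recurrence (\ref{lin}) with the same initial $6K+1$ values and instead verify that $b(n)$ satisfies the quadratic recurrence; this reformulation turns the problem into a single polynomial identity in shifted $b$-values that can be reduced using only (\ref{lin}), and by uniqueness of the quadratic recurrence given initial data we conclude $b(n) = a(n)$ for all $n$.
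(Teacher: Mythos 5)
Your proposal is essentially sound, and your throwaway ``alternative'' at the end is in fact exactly the route the paper takes: the authors define $\{a(n)\}$ by the linear recurrence \eqref{lin} for $n\ge 6K+2$ (with the Lemma~\ref{lemma1} values as initial data), set $\phi(n)=a(n)a(n-(2K+1))-a(n-1)a(n-2K)-a(n-K)-a(n-K-1)$, and prove $\phi(n)=0$ by strong induction. One correction to how you describe that alternative: it is \emph{not} ``a single polynomial identity reducible using only \eqref{lin}.'' After substituting \eqref{lin} for $a(n)$, $a(n-1)$, $a(n-K)$, $a(n-K-1)$ and then for $a(n-2K)$, $a(n-2K-1)$, what survives is exactly $-A(K)\phi(n-2K)-A(K)\phi(n-4K)+\phi(n-6K)$ with $A(K)=2K^2+8K+4$, so the strong induction hypothesis on $\phi$ itself is what finishes the argument. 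Your main attack is the mirror image (induct on the linear relation, use the quadratic one freely), and it does close up --- but you have left the entire content of the lemma inside the sentence ``the hope \dots is that the surviving linear terms telescope to $0$.'' To discharge it: after substituting the inductive hypothesis into $a(n-1)$ and cancelling $L\,a(n-2K-1)a(n-2K)$ from both sides, the remaining obstruction is $L\big[a(n-2K-1)a(n-4K)-a(n-4K-1)a(n-2K)\big]+\big[a(n-6K-1)a(n-2K)-a(n-2K-1)a(n-6K)\big]+a(n-K)+a(n-K-1)$. The first bracket equals $-L\big(a(n-3K)+a(n-3K-1)\big)$ by \eqref{maineqn} at $n-2K$; the second, after substituting \eqref{lin} for $a(n-2K)$ and $a(n-2K-1)$ and applying \eqref{maineqn} at $n-4K$ and $n-6K$, equals $L\big(a(n-5K)+a(n-5K-1)\big)-\big(a(n-7K)+a(n-7K-1)\big)$. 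The grand total is then precisely the sum of the linear-recurrence defects at $n-K$ and at $n-K-1$, each zero by the inductive hypothesis --- so the telescoping is not a hope but the linear recurrence itself, applied at two shifted indices.

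One caveat applies to both your argument and the paper's: these substitutions invoke the recurrences at indices as low as $n-8K-1$, so for $6K+2\le n\le 8K+2$ they are not literally licensed by the inductive hypothesis or by the definition of the sequence, and additional base cases (checked against the explicit polynomials of Lemma~\ref{lemma1}) are required. The paper verifies only $n=6K+2$ and is silent on this range; if you carry out either direction carefully you should handle it explicitly.
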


\begin{proof}
First note that by uniqueness, proving the converse (i.e.- that the
sequence given by the linear recurrence \eqref{lin} satisfies the
quadratic recurrence \eqref{maineqn}) is equivalent to proving that
the statement itself. Thus we prove that the linear recurrence given
by (\ref{lin}) satisfies the quadratic recurrence given by
(\ref{maineqn}) using strong induction. Define the sequence
$\{a(n)\}$ recursively for all $n \geq 6K+2$ by \eqref{lin} and let
$a(n)$ for $1 \leq n \leq 6K+1$ be defined by the initial conditions
given in Lemma \ref{lemma1}. For simplicity in notation, let the
term $2K^2+8K+4$ be called $A(K)$. The proof will use methods
analogous to those used by Hal Canary \cite{Canary} to prove
integrality of the Dana Scott recurrence \eqref{DScott}.

Let
\begin{equation} \label{phin}
\phi(n) = a(n) a(n-(2K+1)) - a(n-1) a(n-2K) - a(n-K) -
        a(n-K-1)
\end{equation}
We wish to prove by induction that $\phi(n) = 0$ for all $n \in
\Z^+$. Clearly for $1 \leq n \leq 6K+1$, $\phi(n) = 0$ since the
first $6K+1$ terms are defined to be the first terms given by
(\ref{maineqn}).

For the base case we must prove that $\phi(6K+2) = 0$. This is
nothing but algebraic calculations easily verified by a computer
algebra system such as Maple or Mathematica.
\begin{displaymath}
\phi(6K+2) = a(6K+2) a(4K+1) - a(6K+1) a(4K+2) - a(5K+2) - a(5K+1)
\end{displaymath}
where we can substitute the initial conditions for the linear
recurrence for all but $a(6K+2)$.
\begin{align*}
a(4K+1)   &= a(3K+(K+1)) = 2K^2+4K+1\\
a(6K+1)   &= a(5K+(K+1)) = 4K^4+24K^3+40K^2+16K+1\\
a(4K+2) &= 6K^2+16K+9\\
a(5K+2) &= 4K^3+24K^2+42K+15\\
a(5K+1)   &= a(4K+(K+1)) = 4K^3+6K^2+10K+1
\end{align*}
And $a(6K+2)$ must be given by the linear recurrence (\ref{lin})
\begin{align*}
a(6K+2) &= (2K^2+8K+4)(a(4K+2)-a(2K+2))+a(2)\\
         &= (2K^2+8K+4)((6K^2+16K+9)-3)+1\\
         &= 12K^4+80K^3+164K^2+112K+25
\end{align*}
Now it is a matter of plugging this into your favorite computer
algebra system and verifying that $\phi(6K+2) = 0$.

Since the base case is verified we can proceed with the induction.
We make the strong induction assumption that $\phi(m)=0$ for $m <
n$. We need to show that $\phi(n) = 0$ given the inductive
hypothesis. Now, compute $\phi(n)$ by substituting for $a(n)$,
$a(n-1)$, $a(n-K)$, and $a(n-K-1)$ from the definition of
$\{a(n)\}$.
\begin{align*}
 a(n) &=  A(K) a(n-2K) - A(K) a(n-4K)+ a(n-6K)\\
 a(n-1) &= A(K) a(n-2K-1) - A(K) a(n-4K-1) + a(n-6K-1)\\
 a(n-K) &= A(K)  a(n-3K) - A(K)  a(n-5K) + a(n-7K)\\
 a(n-K-1) &= A(K)  a(n-3K-1) - A(K)  a(n-5K-1) + a(n-7K-1)
\end{align*}

\noindent After substituting into $\phi(n)$, expand and then
simplify and we are left with
\begin{align*}
\phi(n) =  &  -A(K) a(n-4K) a(n-2K-1) + A(K) a(n-4K-1) a(n-2K)\\
& - A(K) a(n-3K) - A(K) a(n-3K-1) + a(n-2K-1) a(n-6K)\\
& - a(n-2K) a(n-6K-1) + A(K) a(n-5K) + A(K) a(n-5K-1)\\
& - a(n-7K) - a(n-7K-1)
\end{align*}

\noindent You can see that
\begin{align*}
-A(K) a(&n-4K)a(n-2K-1)+ A(K) a(n-4K-1) a(n-2K) +\\
&- A(K) a(n-3K) - A(K) a(n-3K-1) = -A(K) \phi(n-2K)
\end{align*}
 which equals 0 by the induction hypothesis. Thus
\begin{align*}
\phi(n) = {} &  a(n-2K-1) a(n-6K) - a(n-2K) a(n-6K-1) + \\
             & + A(K) a(n-5K) + A(K) a(n-5K-1)- a(n-7K) + \\
             & - a(n-7K-1)
\end{align*}
Substitute for $a(n-2K)$ and $a(n-2K-1)$ from the definition of
$\{a(n)\}$.
\begin{align*}
 a(n-2K) = {} & A(K) a(n-4K) - A(K)  a(n-6K) + a(n-8K)\\
 a(n-2K-1) = {} & A(K)  a(n-4K-1) - A(K)  a(n-6K-1) + \\
                & + a(n-8K-1)
\end{align*}

\noindent Simplify again to obtain
\begin{align*}
\phi(n) = {} & A(K) a(n-4K-1) a(n-6K) - A(K) a(n-4K) a(n-6K-1) +\\
             & - A(K) a(n-5K) - A(K) a(n-5K-1) + \\
             & + a(n-6K) a(n-8K-1) - a(n-6K-1)a(n-8K) + \\
             & - a(n-7K)-a(n-7K-1)\\
        = {} & -A(K) \phi(n-4K) + \phi(n-6K)\\
        = {} & 0
\end{align*}
Thus by induction $\phi(n)=0$ for all $n \in \Z^+$.
\end{proof}

Theorem \ref{thm} now follows directly from the two lemmas. The
first $6K+1$ terms of the sequence generated by the quadratic
recurrence must be integers since they are given by polynomials with
integer coefficients in the variables $K$ and $i$. The rest of the
terms in the sequence satisfy a linear recurrence with integer
coefficients and integer initial conditions. Thus the terms must all
be integers.

\section{Laurentness}\label{Laurent}
Now, instead of declaring that the initial conditions of a
recurrence, $r(n)$, be all 1, we will set $r(i)$ to be the variable
$x_i$ for the first sufficiently many terms. If all terms in a
sequence are Laurent polynomials in the initial variables $\{x_i\}$
then we say that the sequence (or the recurrence producing the
sequence) has the Laurent property. In Section \ref{intro}, we gave
examples of some sequences which were conjectured to be integers,
and that have been shown to possess the Laurent property. Notice
that a sequence having the Laurent property immediately proves
integrality of that sequence. Since definition \eqref{LaurentDef} is
equivalent to defining the set of Laurent polynomials to be the
subset of rational functions in which the denominator is a monomial,
setting all variables equal to 1 in a Laurent polynomial clearly
produces an integer. For this reason, and because the Laurent
property has been helpful in producing a combinatorial proof of
integrality in some cases, many people prefer proofs of Laurentness
versus other proofs of integrality. In \cite{FZ}, Fomin and
Zelevinsky give easily verifiable sufficient conditions for a
recurrence to possess the Laurent property. Using these conditions
one can prove that the sequence given by \eqref{maineqn} with
initial conditions $a(i)=x_i$ for $1 \leq i \leq 2K+1$ has the
Laurent property. For the proof see \cite{Hogan}. We do not include
the proof here because the machinery used, namely Fomin and
Zelevinsky's conditions via cluster algebras, are more advanced than
our proof via linear recurrence. The technique of showing that a
sequence generated by a quadratic recurrence also satisfies a linear
recurrence, as in the proof from Section \ref{proof}, has the
potential to be applied in instances where the recurrence in
question does not satisfy Fomin and Zelevinsky's conditions but
still possesses the Laurent property, or even when the recurrence
satisfies integrality without Laurentness.

\section{Generalization of this Family of Quadratic
Recurrences}\label{Gen}

There is evidence to suggest that an even broader family of
quadratic recurrence relations produces integer sequences. Let the
sequence $\{b(n)\}_{n=1}^\infty$ be generated by the following
recurrence.
\begin{equation} \label{generaleqn}
b(n)b(n-k)=b(n-i)b(n-k+i)+b(n-j)+b(n-k+j)
\end{equation}

\noindent with the conditions that $i< k-i < k$, $j < k-j < k$, and
$b(l)=1$ for all $l < k$. With certain constraints on $k, i$, and
$j$, the sequence $\{b(n)\}_{n=1}^\infty$ is conjectured to be an
integer sequence.

\begin{conjecture}\label{GenFormConj}
Consider the general form of the quadratic recurrence
(\ref{generaleqn}) with initial terms $b(l)=1$ for $0 \leq l \leq
k$.
\begin{itemize}
\item In the case where $k$ is even:
\begin{itemize}
\item If $i$ is odd, then $j=\frac{k}{2}$ defines a recurrence that
generates only integers.

\item If $i$ is even, then $j=\frac{i}{2}$, $j=\frac{k}{2}$, and
$j=\frac{k-i}{2}$ define recurrences that generate only integers.\\
\end{itemize}
\item In the case where $k$ is odd:
\begin{itemize}
\item If $i$ is odd then $j=\frac{k-i}{2}$ defines a recurrence that
generates only integers.

\item If $i$ is even then $j=\frac{i}{2}$ defines a recurrence that
generates only integers.
\end{itemize}
\end{itemize}

\noindent Furthermore, all other values of $j$ do not define a
recurrence that gives integers exclusively.
\end{conjecture}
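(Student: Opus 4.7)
The natural plan is to replicate the two-lemma strategy of Section~\ref{proof} in each of the four positive subcases of the conjecture. Theorem~\ref{thm} handled the triple $(k,i,j) = (2K+1, 1, K)$, which sits inside the subcase ``$k$ odd, $i$ odd, $j = (k-i)/2$'', by producing a companion linear recurrence whose shifts $2K, 4K, 6K$ and coefficient $A(K) = 2K^2 + 8K + 4$ depend polynomially on the parameter. For each of the remaining subcases I would seek an analogous linear recurrence, now with shifts that are integer linear combinations of $k$ and $i$ and coefficient polynomial in both parameters.

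The first phase is experimental: fix several small pairs $(k,i)$ in the subcase, generate a long prefix of $\{b(n)\}$ directly from~\eqref{generaleqn}, and apply Berlekamp--Massey (or a direct linear system solve) to guess a minimal linear recurrence. Comparing the recurrences obtained for different $(k,i)$ then suggests both the shifts and the bivariate polynomial coefficient. With a guessed recurrence in hand, one produces the analogue of Lemma~\ref{lemma1} by unrolling~\eqref{generaleqn} term by term as long as every argument on the right-hand side still lies inside the initial ``all ones'' block; this is routine but bookkeeping-heavy.

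The inductive step of Lemma~\ref{lemma2} now transcribes: define the quadratic defect $\phi(n)$, substitute the candidate linear recurrence into each of $b(n)$, $b(n-i)$, $b(n-j)$, $b(n-k+i)$, $b(n-k+j)$, and check that the expansion collapses into a combination of earlier $\phi$-values. The principal obstacle is precisely this telescoping. In Section~\ref{proof} the cancellation worked because the shifts $\{1, K, K+1, 2K, 2K+1\}$ from the quadratic recurrence interacted cleanly with the shifts $\{2K, 4K, 6K\}$ of the linear recurrence, grouping terms into blocks of the form $\phi(n - cK)$. For general $k, i$ the quadratic shifts $\{i, j, k-j, k-i, k\}$ must align with the shifts of the conjectured linear recurrence as integer multiples of some common period, and this is exactly where the parity and divisibility hypotheses (forcing, for example, $j = k/2$ or $j = i/2$) do real work. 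Verifying the telescoping identity in each subcase reduces to a symbolic computation that is finite in $(k,i)$, but organizing the bookkeeping uniformly across all four subcases is the hard part.

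The negative half of the conjecture --- that every other $j$ fails integrality --- is less uniform and will probably have to be handled by exhibiting explicit denominators. For each excluded triple $(k,i,j)$ one can examine the first few nontrivial terms as rational functions of the parameters; when the denominator of some $b(n)$ fails to cancel against the monomial factors forced by earlier terms, non-integrality follows. Making such an argument uniform in $k$ looks difficult, so the likely route is a finite case analysis on residues of $k$ and $i$ modulo a small integer, supported by computer-assisted search to pin down the minimal offending index $n$ in each class.
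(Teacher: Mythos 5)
The statement you are trying to prove is Conjecture~\ref{GenFormConj}; the paper offers no proof of it, and the authors state explicitly in Section~\ref{conclude} that proving it (by finding companion linear recurrences for every subcase) remains an open goal. So there is nothing in the paper to compare your argument against, and what you have written is a research plan rather than a proof. The plan is sensible --- it is essentially the program the authors themselves suggest --- but every load-bearing step is left unexecuted. You do not exhibit a candidate linear recurrence for even one subcase beyond the $(k,i,j)=(2K+1,1,K)$ case already handled by Theorem~\ref{thm}; Berlekamp--Massey guessing produces only conjectural recurrences for finitely many $(k,i)$, and extrapolating the shifts and the bivariate coefficient polynomial to all $(k,i)$ in a subcase is itself an unproved induction. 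You correctly identify the telescoping of $\phi(n)$ into earlier $\phi$-values as ``the hard part,'' but identifying the obstacle is not the same as overcoming it: in the proven case the cancellation depends on the precise arithmetic alignment of $\{1,K,K+1,2K,2K+1\}$ with $\{2K,4K,6K\}$, and there is no argument here that an analogous alignment exists for general $i$, nor even a conjectured form of the linear recurrence whose shifts could be checked against $\{i,j,k-j,k-i,k\}$.

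The negative half is in worse shape. The claim ``all other values of $j$ do not define a recurrence that gives integers exclusively'' quantifies over infinitely many triples $(k,i,j)$, and for each one you must produce a specific index $n$ at which $b(n)\notin\Z$. Examining ``the first few nontrivial terms'' for small parameter values cannot establish this: a priori the first non-integer term could occur arbitrarily late as $k$ grows, so a finite case analysis on residues of $k$ and $i$ needs to be accompanied by a uniform bound on the offending index, which you do not supply and which is not obviously available. As written, the proposal would at best yield computational evidence for the conjecture --- which is what the paper already has --- not a proof of either direction.
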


For example, the recurrence (\ref{maineqn}) is a special case of the
generalization (\ref{generaleqn}) where $k$ is odd, $i=1$, and
$j=\frac{k-1}{2}$ in agreement with Conjecture \ref{GenFormConj}.

\section{Lifting the recurrences to 2-dimensional space}\label{Lift}

As mentioned in Section \ref{Laurent}, these recurrences satisfy
even stronger conditions than integrality. However, the Laurent
polynomials which are produced will have increasingly large
coefficients. For the purposes of extracting combinatorics, it is
preferable to modify these recurrences so that the sequences are
Laurent polynomials with all coefficients of $1$ (which we will call
\emph{faithful} polynomials) so
that each term counts some object (e.g. a perfect matching).\\
\begin{conjecture}\label{conjecture03} For all conjectured
families of recurrences in section 4 except for the case where $k$
is even and $j=\frac{k}{2}$, the two dimensional recurrence
\begin{align*}
T(n,~k)T(n-k,~k)= {} & T(n-i,~k+2)T(n-k+i,~k-2)+\\
                     & + T(n-j,~k+1)+T(n-k+j,~k-1)
\end{align*}
with initial terms $T(i,j)=x_{i,j}$ for all $i < 0$ will generate
faithful Laurent polynomials in $x_{i,j}$.\end{conjecture}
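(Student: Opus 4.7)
The plan is to prove the conjecture by strong induction on $n$, using the extra dimension introduced by the parameter $k$ to separate terms that would otherwise collide in the one-dimensional recurrence \eqref{generaleqn}. The overall strategy splits into two parts: first, establishing that each $T(n,k)$ is a Laurent polynomial in the variables $\{x_{i,j}\}$; second, establishing that every coefficient equals $1$ (faithfulness).

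For the Laurentness part, I would identify the 2-dimensional recurrence with a sequence of mutations in an infinite-rank cluster algebra whose initial seed is encoded by the initial conditions $T(i,j)=x_{i,j}$ for $i<0$. The shifts in the $k$-direction on the right-hand side ($k\pm 2$ and $k\pm 1$) play the role of the exchange matrix data in a cluster mutation, while the extra summands $T(n-j,k+1)$ and $T(n-k+j,k-1)$ correspond to principal coefficient information. If the Fomin--Zelevinsky exchange/compatibility conditions (a version of the Caterpillar Lemma) hold for this mutation sequence under the parity constraints of Conjecture \ref{GenFormConj}, then Laurentness of $T(n,k)$ in $\{x_{i,j}\}$ follows at once. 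Verifying those conditions case-by-case (odd/even parities of $k$ and $i$) should be routine but tedious.

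For faithfulness, I would attempt to construct a combinatorial model, in the spirit of the perfect-matchings interpretation of Somos-$4$ and of Reid Barton's work cited in Section \ref{intro}. Concretely, I would look for a family of planar bipartite graphs $G_{n,k}$ whose edges are labeled by the $x_{i,j}$ and whose weighted partition function of perfect matchings equals $T(n,k)$, with the recurrence arising from a Kuo-style graphical condensation identity. The monomials of $T(n,k)$ would then be in bijection with the matchings of $G_{n,k}$, automatically giving every coefficient the value $1$. An alternative, purely algebraic route is to run the induction on $n$ and show, by direct manipulation after substituting the inductive expressions for $T(n-i,k\pm 2)$, $T(n-j,k\pm 1)$, $T(n-k+i,k-2)$, and $T(n-k+j,k-1)$, that the numerator factors cleanly as $T(n-k,k)$ times a Laurent polynomial all of whose coefficients are $1$.

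The main obstacle is the second step. Faithfulness is strictly stronger than Laurentness with nonnegative integer coefficients, and it typically requires either an explicit combinatorial model or a clever telescoping identity; any accidental factor of $2$ in even a single coefficient would defeat the claim, so one cannot afford to lose track of monomials through the exponentially many terms produced by the substitutions. I expect the excluded case $k$ even, $j=k/2$ to fail precisely because the two ``linear'' summands $T(n-j,k+1)$ and $T(n-k+j,k-1)$ become symmetric in a way that produces a doubled coefficient in the expansion; identifying the exact mechanism of that failure would both sharpen the boundary of the conjecture and suggest the asymmetry that drives faithfulness in the allowed cases.
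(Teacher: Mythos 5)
The first thing to note is that the paper does not prove this statement: it appears as Conjecture \ref{conjecture03} and is left open, so there is no proof of record to measure your attempt against. Judged on its own terms, your proposal is a reasonable research plan but it is not a proof---both of its pillars are left conditional. For Laurentness, you defer the entire content to ``verifying the Fomin--Zelevinsky conditions,'' which is precisely where the work lies. Moreover, the exchange relation here is a trinomial (one quadratic term plus two linear terms), so this is not a cluster-algebra mutation in the strict sense; you would need the more general Caterpillar Lemma from the Laurent-phenomenon framework, whose hypotheses (coprimality of consecutive exchange polynomials, invariance under the prescribed substitutions) are genuinely restrictive and must be checked against the parity constraints of Conjecture \ref{GenFormConj}. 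The paper indicates, citing \cite{Hogan}, that this has been carried out for the one-dimensional recurrence \eqref{maineqn}, but you give no argument that the lifted two-dimensional recurrence, with its shifts into $k\pm 1$ and $k\pm 2$, inherits those hypotheses.

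For faithfulness---which you correctly identify as the real obstacle, and which is the entire point of lifting to two dimensions---the proposal contains no argument at all: no family of graphs $G_{n,k}$ is exhibited, no condensation identity is verified, and the ``alternative algebraic route'' is a restatement of what would need to be true rather than a derivation. Your heuristic for why the excluded case $k$ even, $j=\frac{k}{2}$ fails is consistent with the paper's remark that combining the two linear terms there immediately destroys faithfulness, but explaining the failure in the excluded case does not supply the positive argument in the allowed cases; an accidental coefficient of $2$ anywhere would sink the claim, as you yourself observe. In short: the decomposition into Laurentness plus faithfulness is sensible and matches how the authors frame the question, but every step that would constitute a proof is deferred, and the statement remains, as in the paper, a conjecture.
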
 Note that
when $k$ is even and $j=\frac{k}{2}$, we still obtain a Laurent
polynomial with this method, but by combining linear terms one can
see immediately that it is not faithful.

Though it is not entirely clear what this lifting process will give
us, it has been a means for finding combinatorics in similar
sequences such as the Reid Barton and Dana Scott recurrences
\cite{Barton}.

\section{Conclusion}\label{conclude}
The integrality proof given here only sheds light on a small
fraction of this new family of quadratic recurrences. It is our hope
to eventually prove that all of the sequences generated by the
quadratic recurrences in Conjecture \ref{GenFormConj} satisfy linear
recurrences, and thus are integer sequences. The fact that the
Laurent property is also satisfied in this special case (and
possibly in the other cases of Conjecture \ref{GenFormConj}) may
lead to the discovery of some family of combinatorial objects which
are counted by this family of integer sequences.

\section{Acknowledgements}\label{acknowledge}
We would like to thank James Propp for suggesting possible paths to
take in this research and for running the Spatial Systems Laboratory
at the University of Wisconsin-Madison where all of our research was
conducted. Thanks also goes to the rest of the Spatial Systems
Laboratory members for their helpful contributions along the way. We
would also like to acknowledge and extend thanks to the NSF's
Research Experiences for Undergraduates program and the NSA for
funding our research. Finally we would like to thank Doron
Zeilberger of Rutgers University for helping us make the final leap
in the formulation and proof of Lemma \ref{lemma1}.

\bibliography{NewFamSomosRecBib}

\end{document}